\DeclareMathOperator{\espo}{e}
\newcommand{\eap}{\espo_{\textup{ap}}}
\newcommand{\expap}{\exp_{\textup{ap}}}
\newcommand{\p}{\partial}
\renewcommand{\H}{\mathcal{H}}
\renewcommand{\O}{\mathcal{O}}
\newcommand{\scalar}[2]{\langle#1,#2\rangle}
\newcommand{\step}[1]{\par\medskip\noindent\it#1\rm}
\newcommand{\Eucl}{\textup{Euc}}
\newcommand{\R}{\mathbb{R}}
\newcommand{\C}{\mathbb{C}}
\newcommand{\N}{\mathbb{N}}
\renewcommand{\t}{{\tau}}
\newcommand{\g}{\mathfrak{g}}
\newcommand{\e}{\varepsilon }
\newcommand{\I}{\mathcal{I}}
\renewcommand{\d}{\delta}
\DeclareMathOperator{\Lie}{Lie}
\renewcommand{\r}{{\rho}}
\renewcommand{\a}{\alpha}
\renewcommand{\b}{\beta}
\DeclareMathOperator{\Lip}{Lip}
\DeclareMathOperator{\Span}{span}
\newcommand{\abs}[1]{\lvert#1\rvert}
\newcommand{\norm}[1]{ \lVert#1\rVert}
\newtheorem{theorem}{Theorem}[section]
\newtheorem{corollary}[theorem]{Corollary}
\theoremstyle{remark}
\theoremstyle{definition}
\theoremstyle{definition}
\newtheorem{example}[theorem]{Example}
\numberwithin{equation}{section}
\begin{document}
\title[A Hadamard-type  theorem for submersions and applications  to control theory]
 {A Hadamard-type open map Theorem for submersions\\
 and applications to completeness results in control theory}
%%%%%%%%%%%%%%%%%%%%%%%%%%%%%%%%%%%%%%%%%%%%%%%%%%
%%%%%%%%%%%%%%%%%%%%%%%%%%%%%%%%%%%%%
 \author{Andrea Bonfiglioli, Annamaria Montanari and Daniele Morbidelli}
 \thanks{
  \textit{Acknowledgements.} The authors are members of the Gruppo Nazionale per
   l'Analisi Matematica, la Probabilit\`a e le loro Applicazioni (GNAMPA)
   of the Istituto Nazionale di Alta Matematica (INdAM).}
 %\corauth[cor1]{Corresponding Author}
 \address{Dipartimento di Matematica,
         Universit\`{a} degli Studi di Bologna\\
         Piazza di Porta San Donato, 5 - 40126 Bologna, Italy.
%          \\
%          Tel.: +39-51-2094498, Fax.: +39-51-2094490.
         }
 \email{andrea.bonfiglioli6@unibo.it, annamaria.montanari@unibo.it, daniele.morbidelli@unibo.it
 }
%%%%%%%%%%%%%%%%%%%%%%%%%%%%%%%%%%%%%%%%%%%%%%%%%
%  \author{Annamaria Montanari}
%  %\corauth[cor1]{Corresponding Author}
%  \address{Dipartimento di Matematica,
%          Universit\`{a} degli Studi di Bologna\\
%          Piazza di Porta San Donato, 5 - 40126 Bologna, Italy\\
%          Tel.: +39-51-2094854, Fax.: +39-51-2094490.}
%  \email{annamaria.montanari@unibo.it}
% %%%%%%%%%%%%%%%%%%%%%%%%%%%%%%%%%%%%%%%%%%%%%%
%  \author{Daniele Morbidelli}
%  %\corauth[cor1]{Corresponding Author}
%  \address{Dipartimento di Matematica,
%          Universit\`{a} degli Studi di Bologna\\
%          Piazza di Porta San Donato, 5 - 40126 Bologna, Italy\\
%          Tel.: +39-51-2094854, Fax.: +39-51-2094490.}
%  \email{daniele.morbidelli@unibo.it}
% %%%%%%%%%%%%%%%%%%%%%%%%%%%%%%%%%%%%%%%%%%%%%%

\date{\today,\;h \currenttime}
\begin{abstract}
 We  prove a quantitative openness theorem for  $C^1$ submersions under suitable assumptions on the differential. We then apply our result to a class of exponential maps
 appearing in
Carnot-Carath\'eodory spaces and we improve  a classical
completeness result by Palais.
% \cite{Palais}.
\end{abstract}

 \keywords{Hadamard inverse map theorem; submersion; complete vector fields;
 involutive vector fields; Moore--Penrose pseudoinverse.
 }
 \subjclass[2010]{Primary: 17B66; Secondary: 34H05, 58C25.}
%34A12:	Initial value problems, existence, uniqueness, continuous dependence and continuation of solutions
%49J15:	Optimal control problems involving ordinary differential equations
%34H05:	Control problems
%22E60:	Lie algebras of Lie groups
%17B66: Lie algebras of vector fields and related (super) algebras
%31B05: Harmonic, subharmonic, superharmonic functions
%35C15: Integral representations of solutions
%31B10: Integral representations, integral operators, integral equations methods
%35J70: Degenerate elliptic equations
%35H20: subelliptic equations
%32A19: Normal families of functions, mappings
%*17B05: Structure theory of Lie algebras
%17B99: Lie algebras
%17B01: Identities, free Lie (super)algebras
%17B65: Infinite-dimensional Lie (super)algebras
%46Hxx: Topological algebras, normed rings and algebras, Banach algebra
%46H99: None of the above, but in this section
%34H05:	Control problems
\maketitle
%%%%%%%%%%%%%%%%%%%%%%%%%%%%%%%%%%%

 \section{Introduction}\label{sec:introduction}
 It is well known by the Hadamard global inverse map theorem that
 {\it a $C^1$ local  diffeomorphism $f:\R^N\to \R^N$ with
  $\sup_{\R^N}\abs{df(x)^{-1}}<\infty$ is in fact a global diffeomorphism. }
Instead, if we consider a submersion $f:\R^N\to \R^n$ with
 $N> n$, one can not expect injectivity, but it is reasonable to hope that suitable
 conditions on
 the differential $df(x)$ may ensure that the map $f$ is onto.  This has been discussed in very general setting in the
 recent paper \cite{Rabier97}.

 The first purpose of  this paper is to prove  a   quantitative openness result for $C^1$ submersions, which in particular gives  a surjectivity theorem assuming uniform openness of the differential map. Our
 second task is to apply such result
to a class of exponential maps appearing in the a\-na\-ly\-sis of
Carnot-Carath\'eodory spaces. This will enable us to obtain an improvement of  a classical
completeness result for vector fields originally due to Palais \cite{Palais}.

 Here is our  first result.

 \begin{theorem}\label{susso}
  Let $M$ be a (finite dimensional) manifold of class $C^2$ and let $g$ be
  {a $C^1$ Riemannian}
  metric on $M$.
  Assume that we are given   positive
 constants
 $C_0<\infty $ and $r_0\leq \infty$, a point $x_0\in\R^q$ and a  map $f:B_\Eucl(x_0,r_0)\to M$  which is $C^1$ regular on the Euclidean
 ball $B_\Eucl(x_0, r_0)\subset\R^q$ such that
the differential (tangent map) of $f$ satisfies
\begin{equation}\label{infinitesimal}
 df(x)(B_\Eucl(0, C_0))\supseteq \{\xi\in T_{f(x)}M: g(\xi,\xi)\le 1\}\quad\text{for all $x\in B_\Eucl(x_0, r_0)$.}
\end{equation}
Then
\begin{equation}\label{quantitativeest}
 f(B_\Eucl(x_0, r_0))\supseteq B_{\mathrm{Rie}}\Bigl(f(x_0),\frac{r_0}{2C_0}\Bigr).
\end{equation}
\end{theorem}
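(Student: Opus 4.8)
The plan is to prove a surjectivity-onto-a-ball statement by a continuation (continuity) argument along Riemannian geodesics (or length-minimizing curves) emanating from $f(x_0)$, lifting them through $f$ by solving an ODE whose right-hand side is built from a bounded right inverse of $df(x)$. The hypothesis \eqref{infinitesimal} says precisely that for every $x$ the surjective linear map $df(x)\colon\R^q\to T_{f(x)}M$ admits a right inverse of operator norm at most $C_0$ (measuring the source with the Euclidean norm and the target with $g$); concretely, given $\xi\in T_{f(x)}M$ we may choose $v\in\R^q$ with $df(x)v=\xi$ and $\abs{v}\le C_0\sqrt{g(\xi,\xi)}$. (One can make this choice canonical via the Moore--Penrose pseudoinverse, which the keywords advertise, but for the proof any measurable/continuous selection of bounded norm suffices.)

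**Main construction.**
First I would fix $y_1\in B_{\mathrm{Rie}}(f(x_0),\rho)$ with $\rho<r_0/(2C_0)$ and pick a piecewise-$C^1$ curve $\gamma\colon[0,1]\to M$ from $f(x_0)$ to $y_1$ with Riemannian length $L<r_0/(2C_0)$; after an arclength-type reparametrization we may assume $\sqrt{g(\dot\gamma(t),\dot\gamma(t))}\le 2L< r_0/C_0$ for a.e.\ $t$. Then I would set up the ODE $\dot x(t)=R(x(t))\bigl[\dot\gamma(t)\bigr]$, $x(0)=x_0$, where $R(x)$ is the chosen bounded right inverse of $df(x)$ at the point $x$; by construction any solution satisfies $\frac{d}{dt}f(x(t))=df(x(t))\dot x(t)=\dot\gamma(t)$, hence $f(x(t))=\gamma(t)$ as long as the solution exists, and $\abs{\dot x(t)}\le C_0\cdot 2L< r_0$. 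Therefore the solution stays in the Euclidean ball $B_\Eucl(x_0,r_0)$ for all $t\in[0,1]$ (the displacement is at most $\int_0^1\abs{\dot x}\,dt\le C_0 L<r_0/2<r_0$), so it does not escape the domain; letting $I\subseteq[0,1]$ be the maximal interval of existence, the uniform bound on $\dot x$ together with the fact that $x(t)$ stays in a compact subset of the domain forces $I=[0,1]$, and we conclude $f(x(1))=\gamma(1)=y_1$. Since $y_1$ was arbitrary in the open ball of radius $r_0/(2C_0)$, this proves \eqref{quantitativeest}. (If $r_0=\infty$ one runs the same argument with $L$ arbitrarily large, so $f$ is onto a component of $M$.)

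**Anticipated obstacle.**
The main technical point is the regularity and existence theory for the lifting ODE: $f$ is only $C^1$, so $R(x)$ is at best continuous (and the pseudoinverse is continuous precisely because $df(x)$ has constant rank $n$ on the submersion), which gives existence of solutions by Peano but not uniqueness — this is harmless for a surjectivity statement, but one must be a little careful to state the continuation argument using only existence. A cleaner route, which I would actually follow to avoid measurable-selection subtleties when $\gamma$ is merely piecewise $C^1$, is to work geodesic-segment by geodesic-segment: parametrize a minimizing geodesic, note its velocity has constant $g$-norm equal to its length, and splice the lifts. The second delicate point is purely bookkeeping with the two metrics: one must consistently track that $\abs{\cdot}$ on $\R^q$ controls the Euclidean displacement of $x(t)$ while $g$ controls the length of $\gamma$, and check that the factor $\tfrac12$ in $r_0/(2C_0)$ is exactly what is needed to keep $x(t)$ strictly inside $B_\Eucl(x_0,r_0)$ with room to spare for the open-endpoint continuation. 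Everything else — compactness of $\gamma([0,1])$, local boundedness of $R$, and the standard ``no blow-up, no escape'' continuation lemma — is routine.
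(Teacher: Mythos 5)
Your proposal is correct and follows essentially the same route as the paper: lift a path of Riemannian length $<r_0/(2C_0)$ through $f$ by integrating a bounded right inverse of $df$ (the paper takes the Moore--Penrose pseudoinverse, i.e.\ the horizontality condition $\dot\theta\perp\ker df(\theta)$), use $\abs{\dot\theta}\le C_0\abs{\dot\gamma}_g$ to keep the lift inside $B_\Eucl(x_0,r_0/2)$, and run a continuation argument that needs only Peano/Carath\'eodory existence, not uniqueness. The only details the paper makes explicit that you gloss over are that the lifting ODE must be formulated in local coordinates (as written, $R(x)$ acts on $T_{f(x)}M$, so the right-hand side is defined only where $f(x)=\gamma(t)$) and that, lacking uniqueness, the paper reaches the supremal existence time via an Ascoli--Arzel\`a compactness step rather than by invoking a maximal solution.
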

In the statement $T_zM$ is the tangent space to $M$ at $z$,
 $B_{\mathrm{Rie}}$ denotes the ball in $M$ with respect to the Riemannian distance generated by $g$.

% \footnote{Questa parte potrebbe essere spostata: Note that we are choosing as a right inverse of $df(x)$ the Moore--Penrose inverse $df(x)^\dag$ (see Fact 1 in Section \ref{eleme}), which has the same smoothness of $df(x)$.
% In the statement,
% $\abs{df(x)^\dag}:=\sup\{\abs{df(x)^\dag\xi}_{\R^q} :\xi\in T_{f(x)}M\quad g(\xi,\xi)=1\}$ denotes the operator norm.}

 Note that the theorem holds true with $r_0=\infty$ and takes the following form:   if $f:\R^q\to (M,g)$ is a $C^1$ submersion and it  satisfies
\eqref{infinitesimal}  for all $x\in\R^q$,  then the map $f $ is onto.
 We observe that, as it will appear in the proof of Theorem \ref{susso},
 our techniques also apply to the case when the source space
 is a Riemannian manifold, but we do not pursue this issue here.
On a concrete level, we will read \eqref{infinitesimal} as a uniform bound of the Moore--Penrose pseudoinverse of $df(x)$ (see the discussion in Section \ref{eleme}).
% Finally, remark that the  requirement in (2) can be restated
% by requiring that   $df(x):T_x\R^q\to T_{f(x)}M$ is onto for all $x\in
%   B_\Eucl(x_0,r_0)$ and assuming the estimate $ \abs{df(x)^\dag}\le C_0$ holds for all $x\in B_\Eucl(x_0,r_0)$, where
%   $\abs{df(x)^\dag}$ denotes the operator norm of the Moore--Penrose pseudoinverse of $df(x)$ (see the discussion in the next
%   section).
%  \footnote{Another equivalent assumption is   $\inf_{\abs{\xi}=1}\abs{df(x)^T\xi}\ge C_0^{-1}$ for all $x$. See the linear algebra Fact 4 in Section \ref{eleme}.}

Topological properties of  a submersion $f$ between Riemannian manifolds
 have been discussed by Hermann \cite{Hermann60}, who
 exploited the notion of \emph{horizontal path-lifting}, requiring that~$f\in C^{1,1}$ and that~$df(x)\Bigr|_{\ker d
 f(x)^\perp}$ is an isometry.  Earle and Eells \cite{EarleEells}  generalized  Hermann's condition  to the infinite dimensional setting, showing that
 horizontal
 path-liftings can be constructed under the assumption  that~$df(x)$  admits a Lipschitz continuous family~$s(x)$ of right inverses. It must be said that
 the mentioned papers  are not primarily concerned with quantitative openness or surjectivity, but they are more focused on the
 topological/differential structure of the fibers of $f$.
 More recent results are due to  Rabier \cite{Rabier97}, who, in order
 to have surjectivity of a
 $C^{1,1}$ submersion $f$ from infinite dimensional Finsler manifolds,
 requires the existence of a Lipschitz-continuous right-inverse satisfying a suitable global upper bound.
This somewhat forces the assumption $f\in C^{1,1}$.  The only reference dealing with $C^1$ maps (between linear spaces
 only) is \cite{Rabier}. That paper does not rely on path liftings and  it does not seem to provide the
 quantitative  estimates \eqref{quantitativeest} of Theorem \ref{susso}

In this paper we shall work in finite dimension, but  without requiring the  $C^{1,1}$ regularity, because in the
applications simple examples show that our submersions  are at most $C^{1,\alpha}$  for some $\a<1$ (see the discussion before the statement of Theorem \ref{mainrough}).
In this case, horizontal  liftings are not necessarily unique, see Example \ref{nonuni} below.

Next we discuss how we  are able to apply Theorem \ref{susso} to Control Theory.
According to a remarkable result of Palais \cite{Palais}, given a family $\H=\{X_1,\dots,X_m\}$ of  \emph{complete} vector fields on a manifold $M$ generating  a
 \emph{finite dimensional } Lie algebra $\g$, then any
 vector field $Y\in\g$  is complete.  Recall that we say that
a vector field $Z$ on a manifold $M$ is complete if for each $x\in M$ the integral curve $t\mapsto e^{tZ}x$ is defined for $t\in\mathopen]-\infty,+\infty\mathclose[$.
 For the sake of brevity, we shall refer
  to this result as \emph{Palais' Completeness Theorem}.
%
%  \emph{finite dimensional} Lie algebra
%  $\g$ of vector fields on a manifold $M$
%  which is
%  Lie-generated by a set $X_1,\ldots,X_m$ of  \emph{complete} analytic vector fields
%  consists itself of complete vector fields.

 Our aim  is to provide an improvement of this theorem, in that we consider lower
 regularity
 assumptions
 on the vector fields and, most importantly, we replace finite-dimensionality with the following
 involutivity hypothesis: there is $s\in\N$ such that the family $ Y_1,\dots, Y_q $ of the nested commutators  (see \eqref{nested}) of length at most $s$ of the original vector fields of $\H$ satisfies
 \begin{equation}\label{involtino}
 [Y_i,Y_j](x)=\sum_{k=1}^q c_{i,j}^k(x)\,Y_k(x),\quad\text{ for all  $i,j=1,\ldots,q\quad x\in M$, }
  \end{equation}
%   $Y_1,\dots, Y_q$ is an enumeration of all commutators of the original vector fields $X_1,\dots, X_m$
%  up to a suitable order $s$ and
for suitable  functions $c_{i,j}^k $  \emph{globally bounded} on $M$. In the sequel, we will say that $\H$ is $s$-involutive if \eqref{involtino} holds true.

Our approach is different from Palais' and from the approaches
 of the existing alternative proofs of Palais' Completeness Theorem.
 At our knowledge, all existing proofs of the Palais Theorem rely on ideas similar to each other, making a
 strong use  the Third Theorem of Lie;  see \cite[p.~145]{ChuKobayashi}, \cite[p.~966]{Jouan}, \cite[p.~96]{Palais}, \cite[p.~147]{Varadarajan}).
%
%  Our task is to provide a simpler and more natural proof, avoiding
%  the use of the Third Theorem of Lie (which is, instead, invoked in the existing proofs of Palais' Theorem;
%
Our more general assumptions, which allow the coefficients $c_{i,j}^k$ to be non-constant, prevent us from applying the Third Theorem of Lie to the Lie algebra generated by $\{X_1,\ldots,X_m\}$, which may not be finite-dimensional
(see Example \ref{cierre}).

 Let $\H=\{X_1,\dots, X_m\}$ be a given family of
 complete vector fields on $M$ and assume  that the involutivity condition \eqref{involtino} holds
 for some $s\ge 1$. Note that requiring that the coefficients $c_{i,j}^k$ are constant on $M$ (a particular case of our assumptions) means that $\Lie\H$ is finite
 dimensional. Look at the non-autonomous Cauchy problem
 \[
  \dot\gamma=\sum_{k=1}^q b_k(t) Y_k(\gamma)\quad\text{for a.e. $t$}\quad\gamma(0)=x\in M,
 \]
where  $\norm{b}_{  L^\infty(\R)}\le 1$. We will show that there are $\e,\d>0$
and vector fields $Z_1,\dots, Z_\nu\in\H$ such that  for all $x\in M$,  $T\in[0,\e]$ one can find $t_1,\dots, t_\nu\in \R$ such that we can write
\begin{equation}\label{full} \begin{aligned}
\gamma(T)&=e^{t_\nu Z_\nu}\cdots e^{t_1 Z_1}x
% \\ &
\quad \text{with the estimate}\quad \sum_{j=1}^\nu \abs{t_j}\le \delta                                   .   \end{aligned}
\end{equation}
% with the fundamental estimate   $$.
This statement will be made precise in the language of Carnot--Carath\'eodory geometry.
Since the choice of $\e$,   $\d$ and $Z_1,\dots,Z_\nu$  is independent of $x\in M$,
an iteration of \eqref{full}  will prove that $\gamma(T)$ is defined for all  $T\in\mathopen]-\infty,+\infty\mathclose[.$
This will yield the completeness of $\gamma$, thus providing the mentioned improvement of Palais' Completeness Theorem.
%
% If the vector fields $Z_1,\dots, Z_d$ would belong to a finite dimensional Lie algebra $\mathfrak{h}$, then the natural tool to control the behavior of the map in \eqref{full}
% would be the Third Lie's theorem, which provides
% the exixtence of a (connected?) simply connected Lie group $ G$ whose Lie algebra
%    $ \g$ is isomorphic to $\mathfrak{h}$ via a Lie isomorphism $\tau:\g\to \mathfrak{h}$.
% Then the analysis of the integrable distribution generated by the vector fields $ \{(X, \t (X)):X\in \g\}$ shows that in such situation,  the projection $\pi_G: S\to G$ from a fixed  leaf $S\subset G\times M$ is a global diffeomorphism  onto $G$. This means that the map in \eqref{full} can be recovered looking at the corrisponding map on  the group $G$, and this would give many information on maps in \eqref{full}. See the discussion in \cite[p.~124, Lemma~2.6.12 and Theorem~2.6.13]{Varadarajan}.
% See also \cite[p.~966]{Jouan}.

The sequence of exponentials appearing in \eqref{full} will be organized as a composition of \emph{ap\-pro\-xi\-ma\-te exponential maps}, which now  we  informally describe.
Let $\H=\{X_1,\dots, X_m\}$ be our given family of vector fields of class $C^s$ on a $C^{s+1}$ manifold $M$ of dimension $n$ and assume that $\H$ satisfies the $s$-involutivity hypothesis \eqref{involtino}.  Let  $\expap(Y_j)x$ be the approximate exponential of the nested  commutator  $Y_j$ in \eqref{nested}, see e.g. \cite{MMPotential}. To fix the ideas, if $Y_j= [X_k, X_\ell]$,  $t\geq 0$ and  $x\in M$, then
\[
 \expap(t Y_j)x:=\expap(t[X_k,X_\ell])x:=e^{-\sqrt t X_\ell}e^{-\sqrt t X_k}
 e^{\sqrt t X_\ell}e^{\sqrt t X_k} x.
\]
 (The  construction  of the   family $Y_1,\dots, Y_q$ is described in detail at the beginning of Section \ref{laterza}.) 
The definition can be generalized to $t<0$ and to commutators of higher length, using more complicated compositions of elementary exponentials. This will be precisely defined in Section~\ref{sottosezione}.

Introduce the map $E_x:\R^q\to M$,
\begin{equation}\label{esso}
 E_x(h):=\expap(h_1 Y_1)\circ\cdots\circ\expap(h_qY_q)x.
\end{equation}
Note that  the point $E_x(h)$ is actually of the form  appearing  in \eqref{full}, although the number of variables $q$ can be much larger than the dimension of $M$. Using a  first order expansion
for the tangent map of $E_x$, we will recognize that the map $E_x$ is a submersion from a neighborhood of the origin to  a suitable  submanifold of $M$ (the Sussmann orbit of the system $\H$ containing $x$). Moreover,
if  the target manifold is equipped with a suitable Riemannian metric,~$E$ satisfies the quantitative assumptions of
Theorem \ref{susso} . Concerning regularity,  in
 \cite[Example~5.7]{MM}) it is shown that we can expect at most that  $E\in C^{1,\alpha}$ for some $\a<1$.
 Thus we obtain the following:
%  \begin{equation}\label{polletto}
% \begin{aligned}E_{x,*}(\p_{h_k}) =   Y_{k}(E_x(h))  +\sum_{\ell_j=\ell_{ k}+1}^{s}
% a^j_k(h)
%  Y_{j  }(E(h))
% + \sum_{i=1}^q  \omega_k^i(x,h)  Y_{i}(E(h)).
% \end{aligned}\end{equation}
% Here the functions $a_k^j$ and $\omega_k^i$ satisfy estimates
% \begin{subequations}\label{pollettino}
% \begin{align}
%  \abs{a_k^j(h)}&\leq C\norm{h}^{\ell_j-\ell_k}\quad\text{for all $h\in\R^q$,}
%  \\\abs{\omega_k^i(x,h)}&\le  C \norm{h}^{s+1-\ell_j}
%  \quad \text{for all $h\in B_\Eucl(0,1) $  and $x\in M$.}
% \end{align}\end{subequations}
% Here  we defined $\norm{h}:=\max_j\abs{h_j}^{1/\ell_j}$, where $\ell_j$ denotes the length of $Y_j$.
% Expansions of the form \eqref{polletto} have been already obtained in $M=\R^n$ in
% \cite{MMPotential} under the assumption of local boundedness of the coefficients $c_{i,j}^k$.
% The new feature  here is that if the coefficients $c_{i,j}^k$ appearing in \eqref{involtino} are globally bounded,
% we get a global statement and the constant $C$ in  estimates \eqref{pollettino} is uniform in $x\in M$. Moreover, it turns out  that the functions  $\omega_k^i$ are independent of $x$ if $\Lie\H$ is finite dimensional.

%
% % \textsf{Decidere quanto rilievo dare al Teorema \ref{susso} nell'introduzione. Vogliamo mettere qui l'enunciato? Forse sarebbe opportuno ma non sono sicuro. }

\begin{theorem} \label{mainrough}
 Let $\H=\{X_1,\dots, X_m\}$ be a family of  complete vector fields of class  $C^s$ on a manifold~$M$ of class
 $C^{s+1}$ and assume that $\H$ is $s$-involutive (i.e., it satisfies \eqref{involtino}). Let $\rho$ be the Carnot--Carath\'eodory distance associated with
 the vector fields $ Y_1,\dots,Y_q $. If the functions $c_{i,j}^k $ in \eqref{involtino} are
 globally bounded on~$M$, then
 there are  absolute constants $\e,\delta> 0$ such that
\begin{equation}\label{expo}
  E_{x}( B_\Eucl(0,\e))\supseteq B_\r(x,\delta)\quad\text{for all $x\in M$.}
\end{equation}
% \footnote{\textsf{(Parte da aggiungere facoltativamente nell'enunciato)}
% More precisely for each $T\in \mathopen]-\infty,+\infty\mathclose[$ there is $\nu\in\N $, there are are $Z_1,\dots, Z_\nu\in\H$ and $t_1,\dots, t_\nu\in\R$ such that
% \[
% \gamma(T)= e^{t_1Z_1}\cdots e^{t_\nu Z_\nu}x\quad\text{with}\quad \sum_{j\le \nu}\abs{t_j}\le
% \frac{CT\e}{\delta}\]
% }
\end{theorem}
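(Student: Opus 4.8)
The plan is to deduce Theorem \ref{mainrough} from the abstract openness result Theorem \ref{susso} applied to the maps $E_x$ in \eqref{esso}, with source $\R^q$ and target a Riemannian manifold built from the Carnot--Carath\'eodory structure of $Y_1,\dots,Y_q$. First I would identify the correct target: because $q$ may exceed $n$, the differential $dE_x(0)$ cannot be surjective onto $T_xM$; instead, by a first order expansion of the approximate exponential one has $\pder{h_j}{E_x(h)}\big|_{h=0} = Y_j(x)$ (up to the usual sign/square-root bookkeeping in $\expap$ for $t<0$), so the image of $dE_x(0)$ is $\Span\{Y_1(x),\dots,Y_q(x)\}$. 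The $s$-involutivity hypothesis \eqref{involtino} guarantees, via the orbit theorem of Sussmann, that this span has locally constant dimension along the orbit through $x$, and that the orbit $\O_x$ is an immersed submanifold of $M$ whose tangent space at every point $y$ is exactly $\Span\{Y_k(y)\}$. So the genuine target of $E_x$ is $\O_x$, and one must check that $E_x$ maps a neighborhood of $0$ into $\O_x$ — this is immediate since each factor $\expap(h_jY_j)$ is a composition of flows of the $X_i$, which preserve the orbit.

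Next I would put on $\O_x$ the Riemannian metric $g_x$ for which $Y_1,\dots,Y_q$ are a generating family realizing the Carnot--Carath\'eodory distance $\r$; concretely one takes the quotient metric induced by declaring the $Y_k$ to be orthonormal (i.e. $g_x(\xi,\xi) = \inf\{\sum a_k^2 : \xi=\sum a_k Y_k(y)\}$), so that the Riemannian distance $d_{\mathrm{Rie}}$ of $g_x$ dominates $\r$ on $\O_x$, hence $B_{\mathrm{Rie}}(x,r)\subseteq B_\r(x,r)$. The crucial verification is the infinitesimal surjectivity bound \eqref{infinitesimal}: I must produce an absolute $C_0$ with $dE_x(h)\big(B_\Eucl(0,C_0)\big)\supseteq \{\xi\in T_{E_x(h)}\O_x : g_x(\xi,\xi)\le 1\}$ for all $h$ in a fixed Euclidean ball $B_\Eucl(0,\e)$ and all $x\in M$. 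Here one differentiates $E_x$ not at $0$ but at a general small $h$; pushing the vector fields $Y_k$ forward through the partial compositions $\expap(h_1Y_1)\circ\cdots\circ\expap(h_{j-1}Y_{j-1})$ produces, by \eqref{involtino} and Gronwall-type estimates, vector fields that are still combinations $\sum_k (\text{bounded})\,Y_k$ of the generators at the new basepoint, with bounds depending only on the sup-norms of the $c_{i,j}^k$ and on $\e$ — this is where global boundedness of the $c_{i,j}^k$ is used and is what makes $C_0$, and hence the final $\e,\delta$, \emph{absolute} (independent of $x$). Granting this, Theorem \ref{susso} with $r_0=\e$ yields $E_x(B_\Eucl(0,\e))\supseteq B_{\mathrm{Rie}}(x,\e/(2C_0))\supseteq B_\r(x,\delta)$ with $\delta := \e/(2C_0)$, which is \eqref{expo}.

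The main obstacle is precisely the uniform differential estimate on $B_\Eucl(0,\e)$: one must control $dE_x(h)$ uniformly in both $h$ and $x$, which requires (i) a careful first order Taylor analysis of the approximate exponential $\expap(t Y_j)$ — note $t\mapsto \expap(tY_j)x$ is only $C^{1,\alpha}$, so one cannot differentiate twice, and the expansion must be done at the level of $\partial_t$ giving $Y_j$ plus a remainder that is $o(1)$ as $t\to0$ uniformly over $M$ thanks to the $C^s$ bounds implicit in \eqref{involtino} — and (ii) the stability of the property ``my tangent vectors are bounded $Y_k$-combinations'' under composition, which is an iterated application of the ad-series/Gronwall argument using \eqref{involtino}. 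A secondary technical point is verifying that $E_x$ is genuinely $C^1$ on $B_\Eucl(0,\e)$ as a map into the manifold $\O_x$ (joint continuity of the mixed flows and their $h$-derivatives), so that Theorem \ref{susso} applies verbatim; this is routine ODE dependence-on-parameters but must be stated. Once these are in place the deduction is immediate, and choosing $\e$ small enough and $C_0$ from the estimate fixes the absolute constants $\e,\delta$.
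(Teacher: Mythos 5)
Your overall architecture is the same as the paper's: apply Theorem \ref{susso} to $E_x:\R^q\to(\O_x,g)$, where $g$ is precisely the metric \eqref{dusss} built from the Moore--Penrose pseudoinverse of $[Y_1,\dots,Y_q]$ (your ``quotient metric'' is the same object, and its distance is in fact equal to $\rho$, not merely dominating it), and extract the uniformity in $x$ from the global bound \eqref{global} on the $c_{i,j}^k$. Where the paper quotes \cite{MM,MMPotential} for the uniform first-order expansion \eqref{quattordici} of $E_*$, you propose to rederive it by Taylor/Gronwall arguments; that is a legitimate, if heavier, route. (A minor slip: $q>n$ does not by itself prevent $dE_x(0)$ from being onto $T_xM$; the reason the target must be the orbit is that the image of $dE_x$ lies in $\Span\{Y_j\}$, which may be a proper subspace.)

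The genuine gap is in the passage from the expansion to the hypothesis \eqref{infinitesimal}. The mechanism you describe --- pushforwards of the $Y_k$ through the partial compositions remain \emph{bounded} combinations $\sum_k(\text{bounded})\,Y_k$ at the new base point --- only bounds $dE_x(h)$ from above in the subunit metric, i.e.\ it gives $dE_x(h)(B_\Eucl(0,C))\subseteq\{\xi: g(\xi,\xi)\le C'\}$, which is the inclusion opposite to \eqref{infinitesimal}. What is needed is the ``identity plus small'' structure $E_*(\p_{h_k})=Y_k(E(h))+\sum_j\epsilon_{kj}(x,h)Y_j(E(h))$ with $\abs{\epsilon_{kj}}\le C\norm{h}$ uniformly in $x$ (your remark about ``$Y_j$ plus an $o(1)$ remainder'' points in this direction but is never carried through the compositions), \emph{and} an inversion step producing, for every subunit $Z$ at $E(h)$, a solution of $\sum_j E_*(\p_{h_j})\xi_j=Z$ with $\abs{\xi}\le C_0$ absolute. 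This inversion is not automatic, because the vectors $Y_1(E(h)),\dots,Y_q(E(h))$ are in general linearly dependent and there is no a priori uniformly bounded way to express one of them through an arbitrary subset. The paper devotes Step 3 of its proof to exactly this: it selects a maximal frame $I\in\I(p,q)$ as in \eqref{massimetto}, uses the Cramer-type bound \eqref{massimino} $\abs{b_j^\a}\le 1$ to rewrite all error terms on that frame, and inverts the resulting $p\times p$ matrix $I_p+\chi$ by a Neumann series; alternatively one may solve the $q\times q$ coefficient system $(I_q+\epsilon^{T})\xi=a$ by a Neumann series, but some such argument must appear. Your ``Granting this, Theorem \ref{susso} \dots yields'' elides precisely this quantitative heart of the theorem; the remainder of your plan (orbit as target, the metric, the role of global boundedness, $\delta=\e/(2C_0)$) matches the paper.
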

In the language of Carnot--Carath\'eodory analysis, Theorem \ref{mainrough} can be referred to  as a ball-box theorem.
The new feature here is the uniformity of the constants $\varepsilon,\delta$ with respect to $x\in M$, which can be proved
as a consequence of the global boundedness of the coefficients in \eqref{involtino}.
From a technical point of view, the novelty is that we are using submersions
instead of local diffeomorphims (which are usually obtained by a restriction of $E_x$ to linear subspaces whose choice is quite delicate).
Ultimately, our arguments, although  they do not provide injectivity results,  seem to be simpler than   previous ones (compare to~\cite{NagelSteinWainger,MM}).

Our improved version of Palais' Completeness
Theorem follows as a corollary.
\begin{corollary}[Improved Palais' theorem]\label{pallina}
 Under the hypotheses of Theorem \ref{mainrough}, the Cauchy problem
 \begin{equation}\label{odessa}
  \dot\gamma=\sum_{j=1}^q b_j(t) Y_j(\gamma)\quad\text{a.e.}\quad\text{with}\quad \gamma(0)=x
 \end{equation} \label{cauchy}
has global solution for all  $x\in M$ and for each  $b\in L^\infty_{loc}(\R)$.
 \end{corollary}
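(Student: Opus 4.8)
The plan is to deduce the corollary from Theorem~\ref{mainrough} by a non-blow-up argument; the decisive feature is that the \emph{uniformity} of $\e$ and $\delta$ in \eqref{expo} yields a lower bound for the lifespan of \eqref{odessa} that does not depend on the initial point~$x$.

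First, a reduction. Each $Y_j$ is a nested commutator of the $X_i$, hence of class at least $C^1$, so for $b\in L^\infty_{\loc}(\R)$ the Cauchy problem \eqref{odessa} falls within Carath\'eodory's theory and has a maximal solution $\gamma$ defined on an open interval containing $0$. Fix $T_0>0$ and put $K:=\norm{b}_{L^\infty([0,T_0])}<\infty$; the reparametrization $\widetilde\gamma(s):=\gamma(s/K)$ turns \eqref{odessa} into the same problem with coefficients $\widetilde b_j(s)=b_j(s/K)/K$, which satisfy $\norm{\widetilde b}_{L^\infty(\R)}\le 1$, and $\gamma$ is defined on $[0,T_0]$ precisely when $\widetilde\gamma$ is defined on $[0,KT_0]$. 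It therefore suffices to prove that \emph{if $\norm{b}_{L^\infty(\R)}\le1$ then the maximal solution of \eqref{odessa} through any $x\in M$ is defined at least on $[-\e_1,\e_1]$, for an absolute constant $\e_1>0$}: restarting such a local solution at $\gamma(\pm\e_1)$ (the new coefficient is a translate of $b$, still of sup-norm $\le1$) and iterating covers $[-KT_0,KT_0]$ after finitely many steps, so $\gamma$ is defined on $[-T_0,T_0]$; letting $T_0\to\infty$ gives global existence.

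The geometric input is that small $\rho$-balls are relatively compact in $M$, uniformly in their centre. Since $\H$ consists of complete fields, the map $E_x\colon\R^q\to M$ of \eqref{esso} is globally defined and continuous for every $x$, being a finite composition of flows of the $X_i$ with time parameters depending continuously on $h$; hence, by \eqref{expo} and compactness of $\overline{B_\Eucl(0,\e)}$, the set $K_x:=\overline{B_\rho(x,\delta)}\subseteq E_x\bigl(\overline{B_\Eucl(0,\e)}\bigr)$ is a compact subset of $M$, for every $x$. Moreover, when $\norm{b}_{L^\infty(\R)}\le1$ the solution $\gamma$ of \eqref{odessa} on its maximal forward interval $[0,\tau)$ is, after a harmless reparametrization, a sub-unit curve for $Y_1,\dots,Y_q$, so $\rho(x,\gamma(t))\le c\,t$ for all $t\in[0,\tau)$, with $c\ge1$ depending only on $q$ and on the normalization adopted for $\rho$. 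Set $\e_1:=\delta/c$ and suppose, for contradiction, $\tau<\e_1$. Then $\gamma([0,\tau))\subseteq B_\rho(x,\delta)\subseteq K_x$, and on $[0,\tau]\times K_x$ the right-hand side $(t,y)\mapsto\sum_j b_j(t)Y_j(y)$ of \eqref{odessa} is bounded (continuity of the $Y_j$, $|b_j|\le1$); hence $\gamma$ is Lipschitz on $[0,\tau)$, extends continuously to $t=\tau$ with $\gamma(\tau)\in K_x$, and the Cauchy problem can be restarted at $\gamma(\tau)$ --- contradicting the maximality of $\tau$ (this is the standard escape criterion: a maximal Carath\'eodory trajectory remaining in a compact set has no finite endpoint). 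Thus $\tau\ge\e_1$, and the same argument run backwards (replace $\gamma(t)$ by $\gamma(-t)$ and $b$ by $-b(-\,\cdot\,)$) gives existence on $[-\e_1,0]$. Combined with the reduction above, this proves that \eqref{odessa} has a global solution for every $x\in M$ and every $b\in L^\infty_{\loc}(\R)$.

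The only real work --- light, once Theorem~\ref{mainrough} is available --- is turning the \emph{uniform} radii of \eqref{expo} into a \emph{uniform} lower bound $\e_1$ on the existence time; thereafter completeness is the routine escape-lemma iteration above, and the passage from $L^\infty$ to $L^\infty_{\loc}$ coefficients is handled by the elementary time rescaling.
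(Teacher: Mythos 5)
Your proposal is correct and follows essentially the same route as the paper: both reduce to $\norm{b}_{L^\infty}\le 1$, then use the uniformity of $\e,\delta$ in \eqref{expo} to trap the trajectory in the bounded (compact) sets $E_x\bigl(\overline{B_\Eucl(0,\e)}\bigr)$ and iterate in uniform time steps, so that no finite-time blow-up can occur. Your write-up only makes explicit some points the paper leaves implicit (the subunit normalization constant $c$, the compactness and escape criterion, and the restart/gluing via uniqueness), but the underlying argument is the same.
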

A solution to the non-autonomous Cauchy problem \eqref{odessa} is  a continuous path $\gamma$
on $[0,T]$ which solves the integral equation
\[
 \gamma(t)=x+\int_0^t \sum_j b_j(s)Y_j(\gamma(s))ds\quad\text{for all $t\in[0,T]$.}
\]
Local existence and uniqueness follow from standard  theory (e.g., the  Picard iteration scheme).
It turns out that $\gamma$ is absolutely continuous and the ODE in \eqref{odessa} holds for a.e.~$t$. Moreover, in spite of the irregularity of the coefficients $b_j$, since the vector fields $Y_j$ are $C^1$, the classical results on dependence on initial data hold and it turns out that  the solution depends in a $C^1$ way from the initial data $x$. This can be seen by carefully checking that the   proof for autonomous systems (see \cite[Chapter~17]{Hirsch}) also
works for a non-autonomous system like \eqref{odessa}.

Before closing this introduction, we mention some applications of Palais' Completeness Theo\-rem (these applications
 also give a motivation for our investigation of this remarkable result), which is often applied jointly with
 \emph{Palais' Integrability Theorem} (as referred to e.g., in \cite{AbouqatebNeeb}): the latter theorem states that,
 given a family of  complete vector fields generating a finite-dimensional
 Lie algebra $\g$, $\g$ can be integrated to a global action of a Lie group.
 Palais' Completeness/Integrability Theorems
  naturally intervene in
   the study of Lie transformation groups of geometric structures on a manifold, see
   \cite[Theorem H]   {ChuKobayashi}. They also find
 many applications in Control Theory:
   \begin{itemize}
     \item[-] see the controllability results in \cite[Section 3]{Hirschorn}, furnishing explicit
     characterizations for the reachable sets;

     \item[-] see the feedback-stabilization results for ODE systems in \cite[Theorem 3.1]
     {MichalskaTorres};

     \item[-] see \cite[Theorems 5.1 and 6.1]{Jouan}, furnishing a characterization of the class
     of
     control systems
     which are globally diffeomorphic to a linear system (on a Lie group or a homogeneous space)
     in terms of finite-dimensional Lie algebras of complete vector fields (this is precisely the
     assumption in
     Palais' Completeness Theorem).
   \end{itemize}

  Furthermore, let us also mention that Palais' Theorem finds applications in the study of ordinary/stochastic differential equations
  (see e.g., \cite{LazaroOrtega, MannoOliveriVitolo}).
%  e.g., they can be applied for the study of Lie-Scheffers stochastic systems
%  (see \cite[Proposition 4.1]{LazaroOrtega}) or in the study of point-symmetries for differential equations (see
%  \cite[Theorem 9]{MannoOliveriVitolo}).
 We finally remark that Palais' Completeness and Integrability Theorems have been recently
 generalized
 also to the framework of infinite-dimensional algebras/manifolds, see \cite{AbouqatebNeeb, Leslie}.
 \medskip

%  In order to explain our approach, we briefly describe two key ideas appearing in Palais' paper.
%  \begin{itemize}
%   \item  [(a)] Starting from the assumption that $X_1,\dots X_m$ are complete,    one can assume
%   that  $\Lie\{X_1,\ldots,X_m\}$ is linearly spanned by a family $Z_1,\dots, Z_d$ of complete vector
%   fields. This is the easy part and is illustrated e.g. in \cite[p.~145, Lemma~1 and ~2]
%   {ChuKobayashi}.
%
% \item [(b)] The natural tool to prove completeness is the understanding of  the maps
% \[
% \R^d\ni (t_1,\dots,t_d)\mapsto e^{t_1 Z_1}\circ\cdots e^{t_d Z_d}x\in M
% \]
%   \emph{uniformly} as $x\in M$. See e.g.~\cite[Lemma~3]{ChuKobayashi}. To do that, one benefits of
%    the exixtence of a (connected) simply connected Lie group $\widetilde G$ whose Lie algebra
%    $\wt\g$
%    is isomorphic to $\Lie\{X_1,\dots,X_m\}$. The study of such maps can be done with the help of the
%    $d$-dimensional
%    foliation in $\wt G\times M$ whose orbits have the form
%    \[
% \begin{aligned}
%  S_{(\wt g,x)}:=  & \bigl\{ ( \wt g \circ \exp(t_1\wt W_1)\circ\cdots\circ \exp(t_\nu\wt W_\nu) ,e^{t_\nu W_\nu}
%  \cdots e^{t_1 W_1}x)\in \wt G\times M
%  \\&
% \qquad \text{ where }\nu \in\N,\;t_j\in\R \;W_j\in\{Z_1,\dots, Z_d\}\quad\text{and } \wt W_j=\tau^{-1}W_j\}.
%   \end{aligned}
%   \]
% Here we denoted by $\tau :\wt\g\to\Lie\{X_1,\dots, X_m\}$ the Lie isomorphism.  The key fact is that the projection $ S_{(\wt g,x)}\mapsto  \wt G$ is a covering map and ultimately a diffeomorphism.  See the discussion in \cite[p.~124 and Theorem~2.6.13]{Varadarajan}.
%
% \end{itemize}

 \section{Open map Theorem for submersions}
%\subsection{Preliminaries}

\subsection{Linear algebra preliminaries. } \quad \label{eleme}

\step{Fact 1.} Let $V,W$ be finite dimensional linear spaces and let
 $g$ be an inner product on $V$. If $T:V\to W$ is a linear map and  $T(V)=W$, then there is a
 unique
 \emph{Moore--Penrose  pseudoinverse} map  $T^{\dag}:W\to V$ such that  for each $w\in W$,
 $T^\dag(w)=v_{\textup{LN}}$ is the minimal norm solution 
  (here `LN' stands for `least norm')
 of the system $T(v)=w$. The solution
 $v_{\textup{LN}}$ is characterized by the conditions
 \[
  T(v_{\textup{LN}})=w\quad\text{and}\quad v_{\textup{LN}}\perp\ker T.
 \]
In particular, $T^\dag$ is a right inverse of $T$, i.e. $TT^\dag=I_W$.

\step{Fact 2.} If $p\le q$ and $A\in \R^{p\times q}$ has independent rows, then the Moore--Penrose
 pseudoinverse $A^\dag$ is a right inverse of $A$ and
 has the explicit form
 \[
  A^\dag =A^T(AA^T)^{-1}.
 \]
For each $y\in\R^p$ the minimal norm solution $x_{\textup{LN}}$ of the system $A x=y$ has the form $x=A^\dag y$ and, among all solutions of such system, it  is characterized by the orthogonality condition
$x_{\textup{LN}}\perp \ker A$.

\step{Fact 3.} If $v_1,\dots, v_q$ span a $p$-dimensional linear  space $W$, then we can define a natural  inner product $g$ on $W$ by letting for all $v,v'\in W$
\begin{equation}\label{gigo}
 g(v,v'):=\scalar{T^\dag v}{T^\dag v'}_{\R^q},
 \end{equation}
where $T^\dag$ is the pseudoinverse of the linear map $T:\R^q\to W$, defined by $T(\xi ) =\sum_{j=1}^q v_j\xi_j$.
If $p=q$, then $T^\dag=T^{-1}$ and the vectors $v_1,\dots, v_p$ turn out to be orthonormal. Note that $g$ is nondegenerate because $TT^\dag=I_W$.
% \footnote{To check that \eqref{gigo} is an inner product, note that $g(v,v)=0$ means $\abs{A^\dag v}=0$, which, by (1) is equivalent to $AA^\dag v=0$ and $A^\dag v\in \ker A^\perp$. But this implies that $A^\dag v\in \ker A\cap\ker A^\perp$.}

\step{Fact 4.}  If $A$ is  a matrix with independent rows, we have $
\abs{ A^\dag}^{-1}=\min_{\abs{y}=1} \abs{A^Ty}$.
Indeed,  let $n\le N$ and let $A\in\R^{n\times N}$. First, it is well known that
$A$  is onto if and only if   $A^T$ is one-to-one (just because~$\operatorname{Im} A=(\ker A^T)^\perp$).
% \footnote{$A$ is onto  if and only if its rank is $n$. Equivalently the rank of $A^T=n$. By the rank/nullity theorem,
% $
% \dim\ker A^T=n- \dim \operatorname{Im} A^T=0.
% $
% }
The required equality can be checked as follows.
% \[\begin{aligned}
%  C_1^2 &:=\abs{A^\dag}^2=\sup_{\abs{y}=1}\abs{A^\dag y}^2
%   \quad\text{and}\quad C_2^{-2} :=
%  \inf_{\abs{y}=1}\abs{A^Ty}^2
% \end{aligned}
% \]
First observe that, since $A$ has full rank and $n\le N$,  $AA^T$ is positive-definite, and moreover,
$A^\dag=A^T(AA^T)^{-1}$.  Thus,
\[
\begin{aligned}
\abs{A^\dag}^2 & =\max_{\abs{y}=1}\abs{A^T(AA^T)^{-1}y}^2=\max_{\abs{y}=1}\scalar{A^T(AA^T)^{-1}y}{A^T(AA^T)^{-1}y}
  \\&
 =\max_{\abs{y}=1}\scalar{ (AA^T)^{-1}y}{ y}=\lambda_{\textup{max}}
 ((AA^T)^{-1})=\frac{1}{\lambda_\textup{min}(AA^T) },
\end{aligned}
\]
where $\lambda_{\textup{max}}$ and $\lambda_{\textup{min}}$ denote the largest/smallest eigenvalue. On the other hand,
\[
\begin{aligned}
 \min_{\abs{y}=1}\abs{A^Ty}^2 = \min_{\abs{y}=1}\scalar{AA^Ty}{y}=\lambda_{\textup{min}}
 (AA^T).%=\frac{1}{\lambda_\textup{max}(AA^T)}.
\end{aligned}
\]
Thus \emph{Fact 4} is checked.

 \subsection{Proof of Theorem \ref{susso}}
  \label{openn}
  Here we prove Theorem \ref{susso}. We will use a horizontal path-lifting argument choosing
  the Moore--Penrose pseudoinverse as a right inverse of the differential. Indeed, the requirement \eqref{infinitesimal} is equivalent to
 the assumption that   $df(x):T_x\R^q\to T_{f(x)}M$ is onto for all $x\in
  B_\Eucl(x_0,r_0)$ and  that the estimate\footnote{An equivalent assumption is   $\inf_{\abs{\xi}=1}\abs{df(x)^T\xi}\ge C_0^{-1}$ for all $x$. See the linear algebra Fact 4 in Section \ref{eleme}.}
  $\abs{df(x)^\dag}\le C_0$ holds true for all $x\in B_\Eucl(x_0,r_0)$, where
  $\abs{df(x)^\dag}$ denotes the operator norm of the Moore--Penrose pseudoinverse of $df(x)$.

%  \begin{remark}\label{ventiquattro}
%  The global version of the theorem ($r_0=+\infty$) states that if $df(x) $ is onto for all $x\in\R^q$ and if
%  $\abs{df(x)^\dag}\le C_0$ globally, then the map $f:\R^q\to M$ is onto.  The technique we use qpply also to the case where
%  the source space is a Riemanniana manifold, but we do not pursue this issue here.
%  \end{remark}

 \begin{proof}[Proof of Theorem \ref{susso}]
Assume that $x_0=0$ and denote  briefly $B_0:=B_\Eucl(0, r_0)\subset\R^q$. Let $y\in B_{\mathrm{Rie}}(f(0), \frac{r_0}{2C_0})$. Thus there is  $r_1< \frac{r_0}{2C_0}$ and  an absolutely continuous path  $\gamma:[0,1]\to M$   such that
 \begin{equation*}
  \gamma(0)=f(0),\; \abs{\dot\gamma(t)}_g\le r_1 \quad\text{for a.e. $t$, and}\; \gamma(1)=y.
 \end{equation*}
  Our strategy relies on the construction of a lifting of $\gamma$. Since there is no uniqueness, it
  is
  crucial to make a ``minimal''  choice (think to the example   $f:\R^2\to\R$ with $f(x_1, x_2) =
 x_1+\arctan x_2 $).
Precisely,
 we claim that there is an absolute continuous path $ \theta:[0,1]\to B_0$
such that
\begin{subequations}\begin{align}
 &\theta(0)=0,\label{first}
 \\&
    f(\theta(t))=\gamma(t)\;\text{for all $t\in[0,1]$,}\label{second}
\\&
 \dot\theta(t)\perp\ker  df(\theta(t)) \quad\text{for a.e. $t\in[0,1]$.}\label{third}
        \end{align}
        \end{subequations}
We refer to the last line as a \emph{minimality condition}. We remark that in the literature such condition is  also called \emph{horizontality condition}.
Note that if $[a,b]\subset[0,1]$ and  an absolutely continuous path $\theta: [a,b]\to B_0$ satisfies \eqref{second} and \eqref{third}, then, differentiating, we get
\begin{equation}\label{duff}
 \abs{\dot\theta(t)}=\abs{df(\theta(t))^\dag\dot\gamma(t)}\le C_0 r_1
 \quad\text{for a.e. $t\in[a,b]$.}
 \end{equation}
 Thus, $\theta$ is Lipschitz continuous with Lipschitz constant $C_0r_1$.

Let \begin{equation}\begin{aligned}
\overline b:=\sup &\Bigl\{b\in[0,1]\text{ such that exists $\theta\in \Lip ([0,b], B_0 )$  with $\theta(0)= 0 $}
\\&\qquad f\circ\theta=\gamma \text{ on $[0,b]$ and $\dot\theta\perp \ker df(\theta)$ a.e. on $[0,b]$} \Bigr\}  .    \end{aligned}
\end{equation}
Our aim is to prove that $\overline b=1$.

\step{Step 1.} We first show that $\overline b>0.$

Fix coordinates $\xi:U\to V$ from an open set $U\subset M$ containing $f(0)$ to an open set $V\subset\R^p.$ Let $\Omega\subset B_0 $ be the connected component of $f^{-1}(U)$ containing the origin. Let  $\psi:=\xi\circ f:\Omega\to V$ and $\sigma(t)=(\xi\circ\gamma)(t)$ for all $t\in[0,1]$ such that  $\gamma(t)\in U$.  Denote   by
$J_\psi(\theta)\in \R^{p\times q}$  the Jacobian matrix of $\psi$ at the point $\theta\in\Omega$ and
look at the Cauchy problem
\begin{equation}\label{auch}
 \dot\theta(t)=J_\psi(\theta(t))^\dag \dot\sigma(t)\quad \text{a.e., with}\quad \theta(0)=0.
\end{equation}
Here  $J_\psi(\theta)^\dag\in\R^{q\times p}$ denotes
 the Moore--Penrose inverse   of  $J_\psi(\theta)$. Since  $J_\psi(\theta)$ has full rank $p\le q$, we
 have the explicit formula
\begin{equation}\label{jkss}
 J_\psi(\theta)^\dag=J_\psi(\theta)^T(J_\psi(\theta)J_\psi(\theta)^T)^{-1}\quad\text{for all $\theta\in\Omega$,}                                                                                                           \end{equation}
which shows that  the function $\theta\mapsto  J_\psi(\theta)^\dag$ is continuous. Therefore, the Cauchy problem \eqref{auch} has at least a solution $\theta:[0,b\mathclose[\to \Omega$    on some small nonempty interval $[0,b\mathclose[$.
More precisely, $\theta$ is absolutely continuous and  solves the integral equation
\[
 \theta(t)=\int_0^t J_\psi(\theta(\tau))^\dag\dot\sigma(\tau)d\tau \quad\text{for all $t\in[0,b\mathclose[$.}
\]
At any differentiability point $t$ we have
 \[
 J_\psi(\theta(t))\dot\theta(t)=J_\psi(\theta(t))J_\psi(\theta(t))^\dag\dot\sigma(t)= \dot\sigma(t),
 \]
which implies that $\psi\circ\theta=\sigma$ on $[0,b\mathclose[$.
Applying $\xi^{-1}$ we get $f\circ\theta=\gamma$.

Finally, to accomplish \emph{Step 1},  we check the orthogonality condition. Since $\theta$ solves the  problem  \eqref{auch}, we have for almost all $t$ the condition
$
  \dot\theta(t)\perp \ker J_\psi(\theta(t))$.
%   =\{v\in\R^q:J_\psi(\theta(t)) v=0\}.
% \]
By the chain rule  $d\psi(\theta) =d\xi(f(\theta))df(\theta)$ and since $d\xi(y)$  is an invertible linear map for all $y\in U  $, we see that  $\ker J_\psi(\theta(t))= \ker df(\theta(t))$, as required.

\step{Step 2.} We show that $\overline{b}=1$.

Assume by contradiction that $\overline{b}<1$. Let $b_n\uparrow \overline{b}$ and let $\theta_n:[0, b_n]\to B_0 $ be a Lipschitz-continuous path such that $\theta_n(0)=0$,
 \[
  df(\theta_n)\dot\theta_n=\dot\gamma \text{ and }\dot \theta_n\perp\ker df(\theta_n) \text{ a.e. on  $[0, b_n]$.}
\]
 We have in fact $\dot \theta_n=(df(\theta_n))^\dag \dot\gamma(t)$
a.e.~on $[0, b_n]$.
Therefore, by \eqref{duff} we get the estimate
\[
 \abs{\theta_n(t)}\le C_0 r_1 \abs{t}\le\frac{r_0}{2},
\]
because we know that $r_1<  \frac{r_0}{2C_0} $. Therefore the image of $\theta_n$ is strictly inside the ball $B_0$.
Next, extend each $\theta_n$ on the whole interval $[0, \overline{b}]$ by letting  $\theta_n(t)=\theta_n(b_n)$ for $t\in[b_n, \overline{b}]$. The family $\theta_n$ is  uniformly bounded and  Lipschitz continuous (see \eqref{duff}). Thus,  by Ascoli--Arzel\`a theorem, up to extracting a subsequence, we may assume that $\theta_n$ converges uniformly on $[0,\overline b]$ to a limit function~$\theta.$
 Start  from the integral formula
\[
 \theta_n(t)=\int_0^t df(\theta_n(\tau))^\dag\dot\gamma(\tau)d\tau \quad\forall t\in[0, b_n],
\]
  and note that coordinate manifestations like \eqref{jkss} show that $\theta\mapsto df(\theta)^\dag$ is continuous.
Passing to the limit as $n\to +\infty$   (by dominated convergence), we discover that $\theta$ satisfies \eqref{first}, \eqref{second} and \eqref{third}  on $[0,\overline{b}]$ and has global Lipschitz constant $C_0 r_1 $  (again by \eqref{duff}).
Therefore the limit $\lim_{t\to \overline b-}\theta(t)=:\theta(\overline{b})$ exists and belongs to~$B_0$.

To conclude Step 2, fix coordinates $\xi':U'\to V'\subset \R^p$ on a neighborhood $U'$ of $f(\theta(\overline{b}))$ and, arguing as in \emph{Step 1}, construct for some $\e>0$
a Lipschitz solution $\Theta:[\overline b,\overline b+\e\mathclose[\to B_0$ such that
\[
 \Theta(\overline b)=   \theta(\overline{b}),\quad  df(\Theta(t))\dot\Theta(t)=\dot\gamma(t)
 \quad \text{and}\quad\dot\Theta(t)\perp\ker df(\Theta(t))\quad\text{for a.e. $t\in[\overline b,\overline b+\e]$.}
\]
Thus we have extended the solution $\theta$ on an interval strictly larger than $[0, \overline b]$ and we get a contradiction. This concludes the proof of \emph{Step 2}
 and, jointly, the proof of the theorem.
\end{proof}

\begin{example}\label{nonuni}
  We show that the horizontal lifting of a  path via a $C^1$ lifting is not unique. Consider the
 $C^1$ submersion  $f:\R^3\to \R$
  \[
  f(x_1,x_2,x_3)=        \frac 23x_1\abs{x_1}^{1/2} +   \frac 23x_2\abs{x_2}^{1/2}+x_3,
             \]
whose gradient is $\nabla f(x)=(\abs{x_1}^{1/2}, \abs{x_2}^{1/2},1)$.
Both the paths
\[
 \theta(t)=\Bigl(\frac 14 t^2,0,t\Bigr)\quad\text{and}\quad
  \phi(t)=\Bigl(0,\frac 14 t^2,t\Bigr)
\]
are liftings of the same path $f\circ\theta=f\circ\phi$. Moreover, they are minimal, because they are both integral curves of the continuous vector field $\nabla f$.
\end{example}

\begin{example}
 The ``horizontal bundle''  $(\ker df)^\perp$ is in general not involutive. For example, taking
 $
  f(x,y,z)=(x+yz,y)\in\R^2,$
one sees that  $(\ker df)^\perp$ is generated by
the vector fields $\p_y$
and $\p_x+z\p_y+y\p_z$, which form a non-involutive system.
\end{example}

\section{Applications to completeness results in control theory}

\subsection{$s$-involutive families of vector fields }
\label{laterza}
Let $ X_1,\dots,X_m $ be a family of vector fields on an $n$-dimensional
manifold $M$  and let $s\in\N$. Assume that $M$ is $C^{s+1}$ and that $X_j\in C^s(M)$ for $j=1,\dots,m$.
Fix a bijection between the  finite set of words  $\{w=w_1w_2\cdots w_\ell: w_j\in\{1,\dots,q\},\; \ell\in\{1,\dots, m\} \}$  and the set $   \{ 1,\dots, q \}$ where $q\in\N$ is suitable.  Enumerate accordingly  as $ Y_1,\dots, Y_q  $ all the  nested commutators up to length $s$ of the original vector fields $X_1,\dots, X_m$.
  Precisely, for each $j\in\{1,\dots, q\}$, $Y_j$ is a bracket of the type
\begin{equation}
\label{nested}
Y_j=[X_{w_1},[X_{w_2}, \dots, [X_{w_{\ell-1}}, X_{w_\ell}]\dots ]],
\end{equation}
where $w_1,\dots, w_\ell\in\{1,\dots,m\}$. In such case we say that $\ell$ is the length of $Y_j$.
Note that each $Y_j$ is a $C^1$ vector field.

  As in \cite{MMPotential}, we
  say that $\H $ is $s$-involutive if   $X_j\in C^s(M)$ for all $j$ and there are locally bounded functions
 $c_{i,j}^k$ (with $1\le i,j,k\le q$) so that
 \begin{equation}\label{integrabile}
  [Y_i,Y_j] (x)= \sum_{k=1 }^q c_{i,j}^k(x)Y_k(x) \quad\text{for all $i,j\in\{1,\dots,q\}.$ }
 \end{equation}
 Here,    in local coordinates we have $Y_j=\sum_{\a=1}^n
 Y_j^\a\p_\a$  and $Y_j^\a\in C^1$.
 Furthermore we set $[Y_i,Y_j]=\sum_\a (Y_iY_j^\a-Y_jY_i^\a)\p_\a$, which is a continuous vector field.
 Observe also that if
  $\dim(\Lie\H)<\infty$, then the family $\H $ is involutive for some $s$   and
  the functions $c_{i,j}^k$ can be chosen to be constant.
Moreover, the local boundedness of $c_{i,j}^k$ in~\eqref{integrabile} ensures that  the distribution generated by
the vector fields $ Y_j $ is integrable, by Hermann's Theorem \cite{Hermann}.
Thus $M$ can be decomposed as a disjoint union of orbits. Namely, we can define the orbit containing $x\in M$ as
\[
 \O_x:=\{e^{t_1 Z_1}\cdots e^{t_\nu Z_\nu}x :\nu\in\N, Z_1,\dots, Z_\nu\in\H\text{ and }(t_1,
 \dots, t_\nu)\in\Omega_{Z_1,\dots,Z_\nu,x}\},
\]
where $\Omega_{Z_1,\dots,Z_\nu,x}\subset\R^\nu$ is a suitable (maximal) open set containing the origin in $\R^\nu$ such that all the exponential maps are well defined.
Each orbit $\O$, equipped with a suitable topology (see \cite{MMPotential})
is  an immersed submanifold  of class $C^2$ of $M$ and it is  an integral manifold of the distribution generated by  the vector fields $Y_j$. In other words,
 $T_y\O= \Span\{Y_j(y):1\leq j\le q\}$ for all $y\in\O$.
% Clearly, the map $E_x$ takes values in the orbit $\O_x\subset M$ containing $x$.
  The number $ p_x:=\dim (\Span\{Y_j(x):1\leq j\le s\})$
% \[
%  p_x:=\dim \Span\{Y_j(x):1\leq j\le s\}=\dim \Span\{X_w(x):\abs{w}\le s\}
% \]
can be different at different points but it is constant on a fixed orbit.
% The obvious relation $
%  p_x\le d
% $ holds true  for all $x$. The inequality can chosen to be strict. See the examples (??).

% Let \textsc{(serve parlare adesso della limitatexa globale? La costante $C_1$ si usa mai?)}
% \begin{equation}\label{global}
%  C_1:=\sup_{x\in M} \abs {c_{i,j}^k(x)} <\infty \quad\text{for all $i,j,k=1,\dots,q$.}
% \end{equation}

By the Linear Algebra preliminaries in the previous section, for any $y\in\O$ we can define an inner product $g$ on $T_y\O$ via the matrix $[Y_1(y),\dots, Y_q(y)]$ as in \eqref{gigo}.
It can be checked that this gives a $C^1$ Riemannian
metric on the $C^2$ manifold $\O$.
The corresponding norm of   $Z\in T_y\O$ is
\begin{equation}\label{dusss}
 \abs{Z}_y:=g(Z,Z)^{1/2}=\min\{\abs{\xi}:\sum_{j=1} Y_j(y)\xi_j=Z\}=\Bigl|[Y_1(y),\dots, Y_q(y)]^\dag Z\Bigr|
 _\Eucl.
%  \qquad\text{for all }
\end{equation}
Note that   $\abs{Z}_y\le 1$ if and only if  $Z$ is subunit in the Fefferman--Phong sense with respect to the family
$Y_1,\dots, Y_q$ (see \cite{FeffermanPhong81}).
We denote by $\rho(x,y)$ the distance associated with such a Riemannian metric, which is nothing but the  Carnot--Carath\'eodory distance associated with the family of vector fields $Y_1,\dots, Y_q$.
 Precisely,
$
 \rho(x,y)$ is the infimum of all $T\ge 0$ such that one can find an absolutely continuous solution $\gamma:[0,T]\to M$  of
 the   problem  $\dot\gamma=\sum_jb_j(t)Y_j(\gamma)$ a.e., with $\gamma(0)=x$, $\gamma(T)=y$ and $\sum_j b_j^2(t)\le 1$ a.e.
 Clearly, $\rho(x,y) <\infty$  for each pair of points on the same orbit.
% We
% let $\rho(x,y)=+\infty$ if $x$ and $y$ belong to different orbits).

We introduce the following notation for multi-indexes.
Let   $p,q\in\N$ with  $ 1\le p\le q $. Define
\[ \I(p,q) :=\{(i_1,
\dots,i_p)  : 1\le i_1<i_2\cdots<i_p\le q)\}  . \]
Moreover, if $x\in M$, $p=p_x$ and  $I\in\I(p,q)$ we let
\[\begin{aligned}
Y_I(x):=Y_{i_1}(x)\wedge\cdots\wedge Y_{i_p}(x)\in
 \bigwedge_p T_x M.
  \end{aligned}
  \]
% Given local coordinates  $(\xi^i)$ on an open set  $\Omega\subset M$,
% denote by $\p_i : =  \frac{\p}{\p \xi^i}  $ and  $\{ \p_K:=\p_{k_1}\wedge\cdots\wedge \p_{k_p} : K\in\I(p,n)\}$ the corresponding frame on  $\wedge_{p}\Omega$ and let
% $d\xi^K:=d\xi^{k_1}\wedge \cdots\wedge d\xi^{k_n}$ be the  dual $p$-forms generating $\wedge^p\Omega$.  We can expand\footnote{We make the following
% slight abuse of notation. If $V$ is a linear space, and  $p,q$ are linear form on $V$, we identify the alternating map $p\wedge q:V\times V\to \R$ defined by
% $p\wedge q(u,v):=p(u)q(v)-p(v)q(u)$ with the linear map
%   $\widetilde{p\wedge q}:V\wedge V\to\R$ with
% $\widetilde {p\wedge q}(u\wedge v)=p\wedge q(u,v)$.
% }
%   \[
%    Y_I(x):=\sum_{K\in\I(p,q)}d\xi^K(Y_I(x))
%    \p_K:=\sum_{K\in\I(p,n)}
%    \det(Y_{i_\a}^{k_\b}(x))_{\a,\b=1,\dots, p}\p_K,
%    \]
% where $Y_i^k(x):=Y_i\xi^k(x)$.
%
Note that $Y_I(x)\neq 0$ means that  $Y_{i_1}(x),\dots, Y_{i_p}(x)$ are independent. Therefore,  they generate~$T_x\O_x$, because $p=p_x$ is its dimension. Moreover, for each $K=(k_1,\dots, k_p)\in\I(p,q)$ we have $Y_{k_1}(x),\dots, Y_{k_p}(x)\in T_x\O$ and thus $ Y_{k_1} \wedge\cdots\wedge Y_{k_p}$ is a scalar multiple of $Y_{i_1}\wedge\cdots\wedge Y_{i_p} $ at $x$. Since the set $\I(p,q)$ is finite, at any point $x\in M$, there is $I\in\I(p,q)$ which is ``maximal" in the following sense:
\begin{equation}\label{massimetto}
\text{for all $K\in\I(p,q)$, we can write $Y_K(x)= c_{K,I}Y_I(x)$ with $|c_{K,I}|\le 1$.                                                                              } \end{equation}
The meaning of the maximality condition \eqref{massimetto} is  the following.
 Let  $x\in M$, $p=p_x$ and  $I\in\I(p,q)$ which  satisfies \eqref{massimetto} at $x$. Choose any independent $p$-tuple 
 $Y_{k_1}(x), \dots, Y_{k_p}(x)$. Then $Y_K(x)\neq 0$ and \eqref{massimetto} implies that $Y_I(x)\neq 0$ too. Therefore, 
 $Y_{i_1}(x),\dots, Y_{i_p}(x)$ are a basis of  $ \operatorname{span}\{Y_i(x):1\leq i\leq q\}$. Therefore,
for all $k\in\{1,\dots, q\}$, the solution $(b_k^1, \dots, b_k^p)\in\R^p$ of the linear system
$
 \sum_{\a=1}^p Y_{i_\a}(x)b_k^\a= Y_k(x)
$
is unique and
satisfies the estimate
\begin{equation}\label{massimino}
 \abs{b_k^\a}\le 1 \quad\text{for all $\a=1,\dots,p$.}
\end{equation} 
% \footnote{\textsf{L'argomento che segue non \`e nient'altro che la regola di Cramer. Si pu\`o omettere tutto? Ditemi voi}}
To see estimate \eqref{massimino}, just note that, at the point $x$,
given $k\in\{1,\dots,q\}$ and $\a\in\{1,\dots,p\}$,
\[\begin{aligned}
 Y_{i_1}\wedge\cdots\wedge Y_{i_{\a-1}}\wedge Y_k\wedge
 Y_{i_{\a+1}}\wedge\cdots\wedge Y_{i_p}
& =Y_{i_1}\wedge\cdots\wedge Y_{i_{\a-1}}\wedge \sum_\b b_k^\beta Y_{i_\beta}\wedge
 Y_{i_{\a+1}}\wedge\cdots\wedge Y_{i_p}
 \\&=b_k^\a Y_{i_1}\wedge\cdots\wedge Y_{i_p},
\end{aligned}
\]
and the  estimate   follows, if $I$ satisfies \eqref{massimetto} at $x$.

%
% \color{blue}
% \begin{example}\label{nonuni}
%   We show that the horizontal lifting of a smooth path via a $C^1$ lifting is not unique. Consider the
%  $C^1$ submersion  $f:\R^3\to \R$
%   \[
%   f(x_1,x_2,x_3)=        \frac 23x_1\abs{x_1}^{1/2} +   \frac 23x_2\abs{x_2}^{1/2}+x_3,
%              \]
% whose gradient is $\nabla f(x)=(\abs{x_1}^{1/2}, \abs{x_2}^{1/2},1)$.
% Both the paths
% \[
%  \theta(t)=\Bigl(\frac 14 t^2,0,t\Bigr)\quad\text{and}\quad
%   \phi(t)=\Bigl(0,\frac 14 t^2,t\Bigr)
% \]
% are liftings of the same path $f\circ\theta=f\circ\phi$. Moreover, they are minimal (=horizontal), because they are both integral curves of the continuous vector field $\nabla f$.
% \end{example}
% \color{magenta}
% \begin{example} (deve essere gi\`a noto. Documentarsi)
%  The ``horizontal bundle''  $(\ker df)^\perp$ is in general non integrable. For example, taking
%  \[
%   f(x,y,z)=(x+yz,y)\in\R^2,
%  \]
% one sees that  $\ker df$ is generated by
% the vector fields
% $
%  \p_x+z\p_y+y\p_z
% $ and $\p_y$, which form a non integrable system.
% \end{example}
%
%
% \color{black}

 \subsection{Proof of Theorem \ref{mainrough}}
 \label{sottosezione}

 Now we are in a position to get the proof of   Theorem~\ref{mainrough}. Our arguments
 rely on some first-order expansion  of a class of \emph{approximate exponential maps} which  was already discussed in
 \cite{MMPotential}.
  We describe here the main
  steps with some comments on the
uniform aspects  of the estimates displaying  in our setting, due to our global boundedness
  assumption $c_{i,j}^k\in L^\infty (M)$.
 Note that such uniformity can not  be expected if $c_{i,j}^k$ are locally bounded only.

 Let
\begin{equation}\label{global}
 C_1:=\sup_{x\in M} \abs {c_{i,j}^k(x)} <\infty \quad\text{for all $i,j,k=1,\dots,q$.}
\end{equation}

 \step{Step 1.}
 In order to denote appropriately the commutators, we use the following  notation:
 given a word $w=w_1w_2\cdots w_\ell$ of length $\ell$ in the alphabet $\{1,2,\dots,m\}$, we
 denote $$X_w=[X_{w_1},[X_{w_2},\dots,[W_{w_{\ell-1}},X_{w_\ell}]\dots]].$$
 Let $w_1,\dots, w_\ell\in\{1,\dots,m\}$ and define  for $\t\in\R$
 the following $C^1$  diffeomorphisms from $M$ to itself (see also \cite{LM}):
 \begin{equation*}
 \begin{aligned}
 C_\t( X_{w_1})& := \exp(\t X_{w_1}),
 \\ C_\t( X_{w_1}, X_{w_2})& :=\exp(-\t X_{w_2})\exp(-\t X_{w_1})\exp(\t
X_{w_2})\exp(\t X_{w_1}),
 \\&\vdots
  \\C_\t( X_{w_1}, \dots, X_{w_\ell})&
%\\&
:=C_\t( X_{w_2}, \dots, X_{w_\ell})^{-1}\exp(-\t X_{w_1}) C_\t( X_{w_2}, \dots,
X_{w_\ell})\exp(\t X_{w_1}). \end{aligned}
 \end{equation*}
Then, given the word $w=w_1w_2\cdots w_\ell$, define the approximate exponential of the commutator    $X_w:=[X_{w_1},[X_{w_2},\cdots,
 [W_{w_{\ell-1}},X_{w_\ell}]]]$ by
\begin{equation}\label{appsto}
\eap^{tX_{w_1 w_2\dots w_\ell}} :=  \expap(t X_{{w_1 w_2\dots w_\ell}}):=
\left\{\begin{aligned}
& C_{t^{1/\ell}}(X_{w_1}, \dots, X_{w_\ell} ), \quad &\text{ if $t\geq 0$,}
\\
&C_{|t|^{1/\ell}}(X_{w_1}, \dots, X_{w_\ell} )^{-1}, \quad &\text{ if $t<0$.}
                 \end{aligned}\right.
\end{equation}
Note that, since our vector fields are complete by assumption,  $\expap(tX_{w_1 w_2\dots w_\ell}) $ is defined for all $t\in\R$.

Next, fix a word  $w=w_1\cdots w_\ell$  and denote   briefly $C_t:=C_t( X_{w_1}, \dots, X_{w_\ell})$. By \cite[Theorem~3.5]{MM} and \cite[Theorem~3.8]{MMPotential}, we have the expansions
\begin{equation*}
  %\label{G1}
  \frac{d}{dt}f (C_t x )= \ell t^{\ell-1} X_w f(C_tx) +\sum_{|v|
  =\ell+1}^s
a_v t^{|v|-1}
  X_vf(C_tx)
+t^s\sum_{\abs{u}=1}^s b_u(x,t)X_uf(C_t x),
\end{equation*}
and
\begin{equation*} % \label{G2}
\begin{aligned}
  \frac{d}{dt}f(C_t^{-1} x) = & - \ell t^{\ell-1} X_w f(C_t^{-1}x)
+\sum_{|v|=\ell+1}^s   \widetilde a_v
  t^{|v|-1}
  X_v f(C_t^{-1}x)
%\\&
  + t^s \sum_{\abs{u}=1}^s   \widetilde{b_u}(x,t)X_uf (C_t^{-1}x),
\end{aligned}
\end{equation*}
where $f:M\to \R$ is any $C^2$ test function. The sums on $v$ are empty if $\abs{w} = s $. If not, we have the cancellation property
$\sum_{\abs{v}=\ell+1}(a_v+  \widetilde a_v)X_v(x)=0$ for all $x\in M$.
In our context, the estimates on   $b_u$ and $  \widetilde{ b_u}$  are global as $x\in M$:
\begin{equation}
 \abs{b_u(x,t)}+
 \abs{\widetilde{ b_u}(x,t)}\le C\quad\text{for all $x\in M$ and
 $\abs{t}\le 1$.}
\end{equation}
Moreover, if the coefficients $c_{jk}^\ell$ are constant, $b_u$ and $\widetilde{b_u} $ are independent of $x$.  This can be seen by tracking the details of the proof in \cite[Theorem~3.8]{MMPotential}.
Note  that our arguments can not provide any estimate on    $b_u(x,t)$
for large $t$.

\step{Step 2.} Next we look at the adaptation to our setting of  Theorem~3.11 in \cite{MMPotential}.
Under the hypotheses of Theorem \ref{mainrough}, if $x\in M$, then the map
  $E_x $ introduced in \eqref{esso}
is globally defined (it is constructed by integral curves of the vector fields in $\H$ which are complete). Moreover  $E_x$
is $ C^1 $  smooth (see \cite[Theorem~3.11]{MMPotential}). Actually $E_x$ sends points of the
orbit $\O_x$ to points of $\O_x$. Finally, in terms of the homogeneous norm $\norm{h}:=
\max_j\abs{h_j}^{1/\ell_j}$, we have  the first order expansion for the differential (tangent map) $E_*$ of  $E:=E_x$
 \begin{equation}\label{quattordici}
E_{*}(\p_{h_k})
 =Y_{k}(E(h))+\sum_{\ell_j=\ell_{k}+1}^s a_k^j(h)Y_j(E(h))+
 \sum_{j=1}^q\omega_k^j(x,h)Y_j(E(h)),\quad\text{for all $h\in\R^q$,}
\end{equation}
where, for all  $j,k,i$, we have the estimates
\begin{equation}\label{stimette} \begin{aligned}
 \abs{a_k^j(h)}&\le C\norm{h}^{\ell_j-\ell_{ k}}\quad \text{for all $h\in
 \R^q$.}
\\
 \abs{\omega_k^j(x,h)}
 &\le C\norm{h}^{s+1-\ell_{k}}\quad\text{for all
$  x\in M$ and $h\in B_\Eucl(0,1)\subset \R^q$.}
  \end{aligned}
 \end{equation}
   Here,
 the  power-type functions $a_k^j$ are always independent of $x\in M$.
 An examination of the arguments in \cite{MMPotential} shows that, under
 the hypotheses of Theorem \ref{mainrough} ($X_j$ global and $c_{j,k}^\ell \in L^\infty(M)$),
 the functions $\omega(x,h)$ can be estimated uniformly as the base point $x $ lies in  $M$.  Precisely, the constant $C$
 depends  only on   $C_1$ in \eqref{global}.
Finally, if the functions $c_{i,j}^k$ in \eqref{integrabile} are constant (as it happens under the assumptions of Palais' Theorem), then
 $\omega_k^j$ depend on $h$ only (not on  $x$) and the expansion
\eqref{quattordici} becomes completely independent of the base point $x$.

\step{Step 3.} Next we show  inclusion \eqref{expo}, using
\eqref{quattordici}. We claim that there is $C_0>1$ such that for all $x\in M$, the map   $E_x:B_\Eucl(0, C_0^{-1})\to (\O_x,g)$ satisfies the hypotheses of Theorem \ref{susso} (the metric $g$ is defined in \eqref{dusss}). Let $h\in B_\Eucl(0,1)\subset\R^q$.
Choose  $I\in\I(p,q)$ which satisfies the maximality condition \eqref{massimetto} at the point
 $E_x(h)$. Therefore for each $j\in\{1,\dots,q\}$, we may write
\[
 Y_j(E_x(h))= \sum_{\a=1}^p b_j^\a Y_{i_\a}(E_x(h)),
 \]
 where  by estimate \eqref{massimino} we have
  $\abs{b_j^\a}\leq 1$ for all $\alpha$.
  Inserting into \eqref{quattordici}, we discover that the map $E_x=:E$ enjoys the following expansion for all $k\in\{1,
  \dots,q\}$:
  \[
   E_{*}(\p_{h_k})=:Y_k(E(h))+\sum_{\a=1}^p \chi_k^\a Y_{i_\a}(E(h))\quad\text{for all}\quad h\in B_\Eucl(0,1)\subset\R^q,
 \]
 where the functions $\chi_k^\a$ satisfy $\abs{\chi_k^\a(x,h)}\le C\norm{h}$, for some universal constant $C$ depending on $C_1$ in \eqref{global} but not on $x\in
 M$.
 Specializing to $k=i_\beta$ with $\beta=1,\dots, q$, we get
\[
 E_*\Bigl(\frac{\p}{\p h_{i_\b}}\Bigr)=Y_{i_\b}(E(h))+
 \sum_{\a=1}^p \chi_{i_\b}^\a Y_{i_\a}(E(h))
 =\sum_{\a=1}^p (\delta_\b^\a+\chi_{i_\b}^\a) Y_{i_\a}(E(h)).
\]
In view of our estimate on the coefficients of the matrix $\chi=(\chi_{i_\b}^\a)_{\a,\b=1,\dots, p }$, we can choose $r_0$ sufficiently small (independent of $x\in M$)
so that that if $\abs{h}\le r_0$, then the operator norm of~$\chi$ satisfies say
$\abs{\chi}\le \frac{1}{2}$. Thus a Neumann series argument
ensures that $\abs{(I_p+\chi)^{-1}}\le 2$ and ultimately we have shown that for all $h\in B_\Eucl(0, r_0)\subset\R^q$ and $\ell\in\{1,\dots,q\}$, the system
\[
 \sum_{j=1}^q E_*(\p_{h_j})\xi_j=Y_\ell(E(h))
\]
has a solution $\xi\in\R^q$ with  $\abs{\xi}\le C_0$, where $C_0$ is an absolute constant. This shows that the assumptions of Theorem \ref{susso} are satisfied.

\begin{proof}[Proof of Corollary \ref{pallina}]Let
 $\gamma $ be a solution of \eqref{odessa}. Assume without loss of generality that $\norm{b}_{L^\infty}\le 1$. Then  we have
\[
x_1:=\gamma(\d/2)\in B_\rho(x,\d )\subset E_x( B_\Eucl(0,\e)),\]
by Theorem \ref{mainrough}. Note that each map $E_x$ is $C^1$ on the whole $\R^q$.
Thus,  $x_1$ belongs to the bounded set $ E_x(\{\norm{h}<\e\})$. Next, let
$x_2: =\gamma(\d )\in B_\r(x_1,\delta)$. Thus  $x_2$ belongs to the bounded set
$ E_{x_1}(B_\Eucl(0,\e))$. By iterating this argument for a sufficient number of times, say $\nu$ such that $\frac{\d}{2}\nu\ge T$,
 we discover that   the set $\gamma([0,T])$ is contained in a bounded set. Thus the solution $\gamma$ can not blow up.
\end{proof}

% \begin{itemize}
%  \item Dato $x\in M$, guardiamo la mappa  $h\mapsto E_{ x}(h)$. Essa  \`e definita in tutto lo spazio
%  e di classe $C^1$. La prima propriet\`a viene dall'ipotesi campi globali, la seconda da \cite[Theorem~3.11]
%  {MontanariMorbidelli11b}.
% Quindi manda insiemi limitati in insiemi limitati.
%
%  \item  Senza perdita di generalit\`a
%  assumiamo $\norm{b}_{L^\infty(\R)}\le 1$. Se $\gamma$ risolve \eqref{odessa} e $\abs{b(t)}\le 1$ a.e., allora
% \begin{equation}\label{erre}
%  \gamma(r^s)\in B_\r(x,r)\quad\text{for all $x\in\R^n$ and $r\in\mathopen]0,1\mathclose]$.}
% \end{equation}
% Dimostrazione. Prendere $\eta(t)= \gamma(r^s t)$ e usare la definizione di $B_\r$.
%
%
%  \item A questo punto prendiamo un qualsiasi tempo $T>0$. RIcordiamo
%  l'inclusione \eqref{expo}  e la \eqref{erre} con $r=\delta$
%  Poniamo $x_1=\gamma(\delta^s)$, $x_2=\gamma(2\delta^s)$, eccetera. Quindi c'\`e una mappa $E_{I_0,x}$ t.c.
%  \[
%   x_1\in B_\r(x,\delta)\subset E_{I_0,x}(\norm{h}\le 1 ) =\text{insieme limitato.}
%  \]
% \[
%  x_2 \in B_\r(x_1,\delta)\subset E_{I_1,x_1}(\{\norm{h}\le 1\} ) =\text{insieme limitato.}
% \]
% fino a che arriviamo quel $\nu\in\N$ per cui $\nu\delta^s\le T<(\nu+1)\delta^s$ ...
%  \end{itemize}
%
%

    \subsection{Examples }
In the first example we exhibit a pair of complete vector fields whose sum is not complete.

\begin{example}
Let
$
X=xy\p_x$ and $Y=xy\p_y$ in $\R^2$. Since
\[
\exp(t(X+Y))(1,1)= \Bigl(\frac{1}{1-t},\frac{1}{1-t}
\Bigr),
\]
 then $X+Y$ is not complete.
\end{example}

% The following example shows that the dimension of a Lie algebra of vector fields on a manifold can be much larger than the dimension of the manifold.
% \begin{example}
%  $X_1= \p_1$ and $X_2= x_1^k\p_2$, where $k\in\N$. In this case it is easy to see that
% $
% \dim \Lie\{X_1,X_2\}=k+2,
% $
%  \end{example}

 In the following two examples, we exhibit finite dimensional Lie algebras of vector fields arising in complex analysis.
\begin{example}
 [CR vector fields on degenerate Siegel domains] Let $(x,t)=(x_1,x_2,t)$ be coordinates in $\R^2\times\R
 =\R^3$. Let  $p(x):=\abs{x}^4$ for $x=(x_1,x_2)\in\R^2$. Consider the pair of vector fields in~$\R^3$
  \[\begin{aligned}
 X_1&: =\p_{x_1}+\p_{2}p \,\p_t=\p_{x_1}+4x_2\abs{x}^2\p_t
 \\X_2&:=\p_{x_2}-\p_{1} p\, \p_t=\p_{x_2}-4x_1\abs{x}^2\p_t,
  \end{aligned}
\]
where $\p_j:=\frac{\p}{\p x_j}$.
Here  and in the sequel we use the notation $X_{jk}=[X_j,X_k]$, $X_{ijk}=[X_i,[X_j,X_k]]$ and so on.
We have
$
 \Lie\{X_1,X_2\}=\Span\{X_1,X_2,X_{12},X_{112}, X_{212},X_{1112}\}.
$
\end{example}

\begin{example}
 [CR vector fields on the sphere]
 We identify $\R^4$ with $\C^2$ via the identification $\C^2\ni(z_1,z_2)\simeq (x_1,x_2,y_1,y_2)\in \R^2$. Let
\[
\begin{aligned}
 X_1&:=x_1\p_{x_2} - y_1\p_{y_2}-x_2\p_{x_1}+y_2\p_{y_1}
 \\ X_2& :    = y_1\p_{x_2} +x_1\p_{y_2}- y_2\p_{x_1}-x_2\p_{y_1}.
\end{aligned}
\]
Then $X_{12}=2 (y_2\p_{x_2} -x_2\p_{y_2} +y_1\p_{x_1}-x_1\p_{y_1}  )$.
 We have
 $
  X_{112}= -4X_2\quad\text{and}\quad X_{212}=4X_1
 $
so that $\Lie\{X_1,X_2\}=\Span\{X_1,X_2,X_{12}\}$ has dimension three.
 \end{example}

Finally we exhibit an example where the Lie algebra does not have finite dimension, but our assumption \eqref{involtino} holds,
with globally bounded coefficients.
    \begin{example}\label{cierre}
 Let
again $(x,t)=(x_1,x_2,t)\in\R^3$, let $p(x_1, x_2,t)= (1+\abs{x}^2)^{1/2}-1$ and take
\[
\begin{aligned}
 X_1 & =\p_1+(\p_2p)\p_t = \p_1+(1+\abs{x}^2)^{-1/2}x_2\p_t
 \\
 X_2& =\p_2-(\p_1p)\p_t=\p_2- (1+\abs{x}^2)^{-1/2}x_1\p_t.\end{aligned}
\]
Then
\[
 [X_1,X_2]=-\frac{2+\abs{x}^2}{(1+\abs{x}^2)^{3/2}}\p_t.
\]
The H\"ormander's rank  condition is fulfilled and moreover
\[
 X_{112}=-\p_1 \frac{2+\abs{x}^2}{(1+\abs{x}^2)^{3/2}}
 \p_t=\frac{(4+\abs{x}^2)x_1}{(1+\abs{x}^2)^{5/2}}\p_t.
\]
We can write $X_{112}=:c(x)X_{12}$ with $c\in L^{\infty}(\R^2).$
The same holds for $X_{212}$.

Note that $\Lie\{X_1,X_2\}$ is not finite dimensional.  This can be seen by looking at the behavior at infinity of the coefficients of higher order commutators.
% Un modo di vederlo: Se $q(x)=q(x_1.x_2)$ \`e un polinomio di grado
% $\mu\ge 0$, e $\lambda$ \`e un numero reale qualsiasi, allora
% \begin{equation*}
% q(x)(1+\abs{x}^2)^\lambda =O(\abs{x}^{\mu+2\lambda})\quad\text{as $\abs{x}\to +\infty.$}  \end{equation*}
% D'altra parte, se $j=1,2$, allora si vede che
% \[
%  \p_j(q(x)(1+\abs{x}^2)^\lambda)=\{q'(1+\abs{x}^2)+2\lambda qx_j\}(1+\abs{x}^2)^{\lambda-1}
%  =O(\abs{x}^{2\lambda+\mu-1}),
%  \quad\text{as $\abs{x}\to +\infty.$}
% \]
% Ricontrollare...

\end{example}

\bigskip 

\noindent\textbf{Acknowledgements.} The authors wish to thank the referee of the paper whose remarks 
 have led to an improvement of the manuscript.

\def\cprime{$'$} \def\cprime{$'$}
\providecommand{\bysame}{\leavevmode\hbox to3em{\hrulefill}\thinspace}
\providecommand{\MR}{\relax\ifhmode\unskip\space\fi MR }
% \MRhref is called by the amsart/book/proc definition of \MR.
\providecommand{\MRhref}[2]{%
  \href{http://www.ams.org/mathscinet-getitem?mr=#1}{#2}
}
\providecommand{\href}[2]{#2}

%
%
%   \bibliography{database_14_05}
%
% \bibliographystyle{amsalpha}

\end{document}